\newcommand{\clo}{\mathrm{S}^1}
\theoremstyle{definition}
\newtheorem{thm}{Theorem}[section]
\newtheorem{prop}[thm]{Proposition}
\newtheorem{lem}[thm]{Lemma}
\newtheorem{rem}[thm]{Remark}
\newtheorem{qs}[thm]{Question}
\begin{document}

\date{}
\author{Andr\'es Navas}

\title{Some examples of affine isometries of Banach spaces 
arising from 1-D dynamics}
\maketitle

\noindent{\bf Abstract.} We provide a large family of examples of affine isometries of the Banach spaces $C^0 (\clo)$, $L^1 (\clo)$ and $L^2 (\clo \times \clo)$ 
that are fixed-point-free despite being recurrent (in particular, they have zero drift). These come from natural cocycles on the group of circle diffeomorphisms, 
namely the logarithmic, affine and (a variation of the) Schwarzian derivative. Quite interestingly, they arise from diffeomorphisms that are generic in 
an appropriate context. We also show how to promote these examples in order to obtain families of commuting isometries satisfying the same properties.

\vspace{0.2cm}

\noindent{\bf Keywords:} Affine isometry, Banach space, cocycle, diffeomorphism.

\vspace{0.2cm}

\noindent{\bf MCS 2020:} 22F99, 37C20, 37E10, 46B04, 51F99.

\vspace{0.75cm}

A single (affine) isometry\footnote{To simplify the discussion, here we call an {\em isometry} what some 
authors call an {\em isometric embedding}, which is nothing but an isometric bijection of the space into itself. However, the discussion 
in \S \ref{s:ejemplos} still applies to {\em isometric maps}, i.e. to maps that preserve distance but are not necessarily bijective.}  
$I$ of a finite-dimensional Banach space $\mathbb{B}$ has a very simple dynamics: either there is a fixed point, or there 
is an ``axis of translation''; in the latter case, $I^n (v) / n$ converges as $n \to \pm \infty$ to a nonzero vector $c^*$ for every $v \in \mathbb{B}$. 
(In \S \ref{s:ejemplos}, we give a very short proof of this very well-known fact that is inspired from Riesz' proof of the mean ergodic theorem; see 
also \cite{Ed'64} for a proof using basic linear algebra.) A nice consequence of this is that, in finite dimension, an isometry $I$ cannot have zero 
drift without having a fixed point. Recall that the {\em drift} of $I$ is the value of the limit of $\| I^n (v) \| / n = \mathrm{dist} (I^n (v), 0) / n$. Such 
a limit exists for isometries of arbitrary metric spaces (by subadditivity)  and does not depend on $v$ (by the triangle inequality). 

This dichotomy ``fixed point\,/\,nonzero drift'' is far from being true in the infinite dimensional case, even for Hilbert spaces. This very old 
remark (going back to Carleman and Riesz~?) was cleverly pointed out in a very strong form by Edelstein in \cite{Ed64}. Namely, 
he produced examples of fixed-point-free affine isometries of $\ell^2$ with zero drift that are {\em recurrent}, in the sense that 
there exists an increasing sequence $r_n$ such that $I^{r_n}$ converges to the identity in the strong topology as $n \to \infty$. 
(This means that $I^{r_n} (v)$ converges to $v$ for each $v$ in the space.) 
See \S \ref{s:ejemplos} for a discussion and slight extension of Edelstein's examples. 

The purpose of this Note is to show that affine isometries with these ``strange'' properties arise in an abundant way in the context of classical smooth 
dynamical systems, more precisely, associated to circle diffeomorphisms. In \S \ref{s:circle}, we review the necessary background on these dynamics. 
In \S \ref{s:banachs}, we give natural examples of recurrent, fixed-point-free affine isometries of the Banach spaces $C^0 (\clo)$, $L^1 (\clo)$ and 
$L^2 (\clo \times \clo)$.  As we will see, these turn to arise ``generically'' in the closure of the set of $C^{\infty}$ circle diffeomorphisms with irrational 
rotation number. In particular, they are associated to a dense sets of diffeomorphisms.  This motivates the following question (compare \cite{Ei10}).

\vspace{0.32cm}

\noindent{\bf Question 1.} {\em Given an isometry \, $I = \Theta + c$ \, of an infinite dimensional Hilbert space with zero drift,  
can one approximate $I$ (in some natural topology) by isometries $I_n = \Theta_n + c_n$ that are 
recurrent but fixed-point-free~? Can such a perturbation be made ``generic'' in an appropriate framework~?}

\vspace{0.32cm}

It is very likely that isometries sharing a similar property can be constructed starting from diffeomorphisms of higher dimensional tori, 
particularly pseudo-rotations, given the very nice recent advances in their study (see e.g. \cite{WZ18}). However, we do not pursue in this 
direction since it seems that, from the functional-analytic perspective, this wouldn't produce something particularly different from what is already 
obtained using circle diffeomorphisms. More importantly, we see no way of extending our most interesting construction, namely that in $L^2$, 
since the cocycle we use is only suitable for the one-dimensional setting. Needless to say, to build our examples we do not use 
particularly new results concerning circle diffeomorphisms, but just the classical ones from \cite{He79} (even those of \cite{Yo84} are 
not strictly necessary).

In \S \ref{s:com}, we show how to promote the examples built in \S \ref{s:banachs} to obtain families of commuting isometries satisfying 
similar properties. More precisely, on each of the Banach spaces listed above, we give examples of groups of affine isometries isomorphic 
to $\mathbb{Z}^{\infty}$ such that every nontrivial element is recurrent and fixed-point-free. (We explain in \S \ref{s:ejemplos} how to obtain 
such a group of affine isometries of $\ell^2$ using Edelstein's construction.) Inspired by \cite{CTV07,CTV08}, we state a question in this 
regard that seems interesting from a group theoretical perspective.

\vspace{0.32cm}

\noindent{\bf Question 2.} {\em What are the (say, finitely generated) groups that may be realized as groups of affine isometries 
of a Hilbert space so that every infinite order element is recurrent but fixed-point-free~?}

\vspace{0.32cm}

Obvious examples of groups not having this property are nontorsion groups with Kazhdan's property (T) and, more generally, groups 
with the relative property (T) with respect to a nontorsion subgroup (see e.g. \cite{Na02, Na05} and references therein). However, 
there should certainly be more examples. In the opposite direction, it is very natural to ask whether nilpotent groups do have this property. 

\vspace{0.35cm}

Part of the content of this Note is spread as remarks/exercises in several sources, as for instance \cite{Na11} (see Exercise 5.2.26 therein) and \cite{Na23}. 
However, they appear with only few details and no major developments (in particular, no claim concerning commuting diffeomorphisms is made in these 
works). This text arose from the necessity of giving a concise yet clarifying discussion and a slight extension of all of this. Besides, it allows giving still another 
illustration of how useful the methods of dynamical systems can be the subject, particularly because of the flexibility of several well-developed perturbation 
techniques aiming to produce maps (e.g. manifold diffeomorphisms) with strange (chaotic) dynamics that are actually generic in appropriate contexts. This is 
somehow reminiscent of the celebrated Alspach's example \cite{Al81} of an isometry of a weakly compact convex subset of an $L^1$ space with no fixed point, 
which is nothing but a clever functional analytic reinterpretation of the classical baker map. Indeed, this example has many natural variations/generalizations arising 
from ergodic theory (see for instance \cite{Sc82}). In this spirit, motivated by \cite{Ka24}, it would be interesting to study the case of positive drift aiming to 
find the invariant ``functionals'' (Busemann like functions) in each framework discussed below (particularly the $L^2$ one).

I strongly thank Anders Karlsson for useful discussions that motivated the elaboration of this Note, as well as H\'el\`ene Eynard-Bontemps, Mario Ponce and 
Ignacio Vergara for his interest on it. I also thank Yves de Cornulier and Alain Valette for (somewhat old) discussions on the third (i.e. the $L^2$) construction 
presented here, as well as Tanja Eisner and Aimo Hinkkanen for a couple of valuable references and remarks.


\section{Some general remarks on affine isometries}
\label{s:ejemplos}

Let us consider an isometry $I := \Theta + c$ of the Euclidean space $\mathbb{R}^n$, where $\Theta \in O(n)$ is an orthogonal 
linear map and $c \in \mathbb{R}^n$ is a cocycle. For all $n \in \mathbb{N}$ and all $v \in \mathbb{R}^n$, we have
\begin{equation}\label{eq:iteraciones} 
I^n (v) = \Theta^n (v) + \sum_{i=0}^{n-1} \Theta^i (c); 
\qquad 
I^{-n} (v) = \Theta^{-n} (v) - \sum_{i=1}^n \Theta^{-i} (c). 
\end{equation}
Let $Im := (\Theta - Id) (\mathbb{R}^n)$. Recall that $Im^{\perp}$ is the subspace $Fix$ of fixed points of $\Theta$, since 
\begin{eqnarray*}
u \in Im^{\perp} 
&\iff& \langle \Theta (u')- u', u \rangle = 0 \mbox{ for all } u' \in \mathbb{R}^n \\
&\iff&  \langle u', \Theta^{-1}(u) - u \rangle = 0 \mbox{ for all } u' \in \mathbb{R}^n 
\,\, \iff \,\, \Theta^{-1}(u) = u 
\,\, \iff \,\, u = \Theta (u).
\end{eqnarray*}
Let hence $c = \bar{c} + c^*$ the decomposition of $c$ with $\bar{c} \in Im$ and $c^* \in Fix$. Then (\ref{eq:iteraciones}) 
becomes, for all $n \geq 1$,  
$$I^n (v) = \Theta^n (v) +  \sum_{i=0}^{n-1} \Theta^i (\bar{c}) + nc^* ; 
\qquad 
I^{-n} (v) = \Theta^{-n} (v) -  \sum_{i=1}^{n} \Theta^{-i} (\bar{c}) - nc^* .
$$
If we take $w$ such that $\bar{c} = \Theta (w) - w$, then the previous equalities become, via a telescopic sum trick, 
$$I^n (v) = \Theta^n (v) +  (\Theta^n (w)-w) + nc^*; 
\qquad 
I^{-n} (v) = \Theta^{-n} (v) - \Theta^{-1}(\Theta^{-n} (w)-w)  - n c^*; 
$$
Thus, 
$$\Big\| \frac{I^n(v)}{n} - c^* \Big\| = \frac{ \big\| \Theta^n (v) +  (\Theta^n (w)-w) \big\| }{n} \leq \frac{\|v\| + 2 \|w\|}{n} \longrightarrow 0$$
and
$$\Big\| \frac{I^{-n}(v)}{-n} - c^* \Big\|= \frac{ \big\| \Theta^{-n} (v) - \Theta^{-1}  (\Theta^{-n} (w)-w) \big\|}{n} \leq \frac{\|v\| + 2 \|w\|}{n} \longrightarrow 0$$
as $n \to \infty$. Finally, if $c^* = 0$, then $\{ I^n (v) \!: n \in \mathbb{Z} \}$ is a bounded set, and therefore $I$ has a fixed point (namely, the Chebyshev 
center of this set \cite{CNP13}). Note that, in the case $c^* \neq 0$, the translation $v \to v+w$ conjugates $I$ to the isometry $\hat{I} := \Theta + c^*$, for which the 
iterates take the simple form $\hat{I}^n (v) = \Theta^n (v) + nc^*. $ 

The computation above works more generally for any (nondegenerate) scalar product on $\mathbb{R}^n$. Now, if $\Theta$ is a linear isometry for another metric, 
it is not hard to produce a scalar product that is invariant by $\Theta$, so that the previous argument applies. One way to produce this invariant scalar product 
consists in noting that the set $S$ of maps $\{ \Theta^n \!: n \in \mathbb{Z} \}$ is precompact for the norm topology (this readily follows by looking at the set of 
the restriction maps to the unit ball). One can thus take the mean along the orbit by the (compact) closure of $S$ of any given scalar product (computed with 
respect to the Haar measure). Alternatively, to avoid using an invariant scalar product, one can use a theorem from \cite{Na13} to produce an explicit fixed point, 
namely the barycenter of the closure of any orbit with respect to the Haar measure on the (compact) closure of the group $\{I^n \!: n \in \mathbb{Z}\}$.

\begin{rem} The existence of a limit for $I^n (v) / n$ holds more generally for contractions of Hilbert spaces \cite{Pa71}, 
and even on reflexive Banach spaces\footnote{A beautiful alternative proof of this fact, due to Karlsson, appears in  
\cite{Ka}; see also \cite{Go18} for a very concise discussion on his beautiful argument.} \cite{KN81} and slightly more general normed spaces 
\cite{KN'81}. It is not true for isometric maps on general Banach spaces, as it is shown by the following example (with drift 1): 
$$(x_1,x_2,\ldots) \in \ell^1 \, \mapsto \, (1,x_2,x_2,\ldots) \in \ell^1$$
There seems to be no example in the literature of a Banach space isometry for which the limit does not exist,  
and the situation is unclear to us. 
\end{rem}

Let us next revisit Edelstein's construction using some simplifying arguments from \cite{Va07}. 
We consider the map $I$ of the (complex) Hilbert space $\ell^2$ given by 
\, $(x_1,x_2,\ldots) \mapsto (y_1, y_2, \ldots)$, \, where  \, 
$y_n = e^{\frac{2 \pi i}{n!}} (x_n - 1) + 1 = e^{\frac{2 \pi i}{n!}} x_n + \big( 1 - e^{\frac{2 \pi i}{n!}} \big)$.

\vspace{0.25cm}

\noindent{\bf I maps $\ell^2$ into $\ell^2$:} Note that $( 1 - e^{\frac{2\pi i}{n!}} )_n$ belongs to $\ell^2$, since, for $n \geq 2$,  
$$\ \big\| 1 - e^{\frac{2 \pi i}{n!}} \big\| 
= \left( 2 \Big( 1 - \cos \big(\frac{2\pi}{n!} \big) \Big) \right)^{1/2} 
\leq 2  \sin \big(\frac{2\pi}{n!} \big) 
\leq \frac{4\pi}{n!}.
$$
Since $\| e^{\frac{2 \pi i}{n!}} x_n \| = \| x_n \|$, this yields 
$$\|(y_1,y_2,\ldots)\|_{\ell^2} \leq \|(x_1,x_2,\ldots)\|_{\ell^2} +  \left( \sum_{n\geq 4} \Big( \frac{4\pi}{n!} \Big)^2\right)^{1/2}.$$

\vspace{0.25cm}

\noindent{\bf I is an isometry:} Letting $(y_1',y_2',\ldots) := I (x_1',x_2',\ldots)$, one obtains
$$\| y_n - y_n' \| = \| e^{\frac{2\pi i}{n!}} (x_n-x_n') \| = \| x_n - x_n' \|,$$
hence $\| I(x_1,x_2,\ldots) - I(x_1',x_2',\ldots) \|_{\ell^2} = \| (x_1,x_2,\ldots) - (x_1',x_2',\ldots) \|_{\ell^2}$. Moreover, 
one readily checks that $I$ is surjective.

\vspace{0.25cm}

\noindent{\bf I has no fixed point:} The equality $ I (x_1,x_2,\ldots) = (x_1,x_2,\ldots)$ forces $x_n = e^{\frac{2 \pi i}{n!}} (x_n - 1) + 1$, 
which easily yields $x_n=1$ for $n \geq 2$. Hence, the only candidates to fixed points are of the form $(x_1,1,1,\ldots)$, but such a 
point does not belong to $\ell^2$. 

\vspace{0.25cm}

\noindent{\bf I is recurrent:} For $r_n := n!$ we have $I^{r_n} (x_1,x_2,\ldots) = (x_1,x_2,\ldots,x_n,y_{n+1}^*,y_{n+2}^*)$, where 
$y_{n+k}^* = e^{\frac{2\pi i n!}{(n+k)!}}  (x_{n+k}  - 1) + 1$. Thus,  
$$ y_{n+k}^* - x_{n+k}  = x_{n+k} \Big( e^{\frac{2\pi i n!}{(n+k)!}} - 1 \Big)  - \Big( e^{\frac{2\pi i n!}{(n+k)!}} - 1 \Big).$$
Now, for all $n \geq 1$ and $k \geq 1$,  
$$ \left\|  e^{\frac{2\pi i n!}{(n+k)!}} - 1  \right\| 
= \left( 2 \Big( 1 - \cos \big(\frac{2\pi n!}{(n+k)!} \big) \Big) \right)^{1/2} 
\leq 2 \sin \big(\frac{2\pi n!}{(n+k)!} \big) \leq \frac{4 \pi}{(n+1)\cdots(n+k)},$$
Using this and the Cauchy-Bunyakovsky-Schwarz inequality, 
one easily concludes that the value of \, $\| I^{r_n}(x_1,x_2,\ldots) - (x_1,x_2,\ldots) \|$ \, converges to 0 as $n \to \infty$. 

\vspace{0.25cm}

Note that no power of $I$ can have a fixed point, otherwise the orbits of $I$ would be bounded and $I$ itself would have a fixed point (namely, the Chebyshev 
center of any orbit). Moreover, since $r_n$ is a multiple of $m$ for all $n \geq m$, we have that $I^m$ is also recurrent. Hence, not only $I$ but also all of its 
powers are recurrent but fixed-point-free. 

\begin{rem} \label{rem:isom-todos}
Although very explicit in this case, the last property is not particular to Edelstein's isometry. More precisely, if $I$ is recurrent, then 
all of its powers are. Indeed, if $I^{r_n}$ converges to the identity for the strong topology, then for each fixed $m \neq 0$ and every vector $v$,
\begin{small}
$$\| (I^m)^{r_n} (v) - v  \| = \Big\| \sum_{i=1}^m I^{ir_n} (v) - I^{(i-1)r_n} (v) \Big\| \leq  \sum_{i=1}^m \big\|  I^{(i-1)r_n} (I^{r_n}(v)) - I^{(i-1)r_n} (v) \big\| 
= \sum_{i=1}^{m} \big\| I^{r_n}(v)-v \big\|.$$
\end{small}Therefore, $\| (I^m)^{r_n} (v) - v \| \leq m \, \| I^{r_n} (v) - v \|$ converges to 0. 
\end{rem}

Finally, note that by considering products of Hilbert spaces and letting isometries as above acting on the factors, one can build groups 
of affine isometries isomorphic to $\mathbb{Z}^{d}$ (and, actually, to $\mathbb{Z}^{\infty}$, with some minor adjustments) all of whose 
nontrivial elements are recurrent but fixed-point-free. Alternatively, one can randomly choose some coordinates of $\ell^2$ and rotate 
them around 1 by the angles in Edlestein's example and do not rotate the other ones. Different random choices will produce 
different recurrent isometries that will actually commute. Moreover, for generic random choices, these will turn to be 
fixed-point-free (for this, it suffices to rotate infinitely many coordinates).


\section{Some background on circle diffeomorphisms}
\label{s:circle}

We collect here several classical results on the dynamics of circle diffeomorphisms with irrational rotation number that will be crucial 
for the examples that we will later discuss. All maps we consider are orientation preserving. For the first result, recall that $C^{1+bv}$ 
stands for $C^1$ maps whose derivatives have bounded variation.

\begin{thm} {\bf [Denjoy]} \label{denjoy}
{\em If $f$ is a $C^{1+bv}$ circle diffeomorphism of irrational rotation number $\rho$, then the sequence of iterates $f^{q_n}$ 
converges to the identity in $C^0$ topology, where $p_n / q_n$ is the sequence of the rational approximations of $\rho$ 
obtained from its continuous fraction expansion.}
\end{thm}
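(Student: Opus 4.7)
The plan is to deduce the statement from two classical ingredients: the Poincar\'e semi-conjugacy between $f$ and $R_\rho$, and the Denjoy distortion inequality which upgrades this semi-conjugacy to a topological conjugacy under the $C^{1+bv}$ hypothesis. Once a conjugating homeomorphism $h$ satisfying $h \circ f = R_\rho \circ h$ is in hand, the conclusion is immediate: the convergents satisfy $|q_n \rho - p_n| < 1/q_{n+1} \to 0$, so $R_\rho^{q_n}$ converges uniformly to the identity, and by uniform continuity of $h^{-1}$ the same holds for $f^{q_n} = h^{-1} \circ R_\rho^{q_n} \circ h$.

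First I would invoke Poincar\'e's classification theorem: because $\rho \notin \Q$, $f$ has no periodic orbit, and every orbit has the same cyclic order as that of $R_\rho$. This yields a continuous, nondecreasing, degree-one map $h \colon \clo \to \clo$ semiconjugating $f$ to $R_\rho$. Such an $h$ fails to be a homeomorphism exactly when $f$ has a wandering interval, i.e. a nontrivial interval $I$ whose forward iterates $f^n(I)$ are pairwise disjoint. Thus, the entire content of the theorem reduces to ruling out wandering intervals in the $C^{1+bv}$ setting.

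The key combinatorial input from the continued fraction expansion of $\rho$ is the following: for every $x \in \clo$, the intervals $[x, f^{q_{n-1}}(x)]$ and $[f^{-q_n}(x), x]$ are ``shortest return'' intervals, and their first $q_n$, respectively $q_{n-1}$, forward iterates under $f$ have pairwise disjoint interiors. Consequently, summing the oscillation of $\log Df$ along the orbit yields the Denjoy inequality
$$e^{-V} \,\le\, \frac{Df^{q_n}(x)}{Df^{q_n}(y)} \,\le\, e^{V},$$
valid for all $x,y$ in a common short-return interval, where $V$ denotes the total variation of $\log Df$ on $\clo$.

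Finally, assuming for contradiction that a wandering interval $I$ exists, I would apply this distortion bound to $I$ viewed inside a suitable short-return interval, comparing the lengths of $f^{q_n}(I)$ and $f^{-q_n}(I)$. The distortion estimate forces these two lengths to be of the same order of magnitude, but since all iterates of a wandering interval are disjoint, the summability $\sum_{n \in \mathbb{Z}} |f^n(I)| \le 1$ forces both to tend to zero at very different rates along appropriate subsequences; combining the two bounds produces the contradiction. The main obstacle is exactly this last step: carefully choosing the two iterates to be compared, and ensuring the combinatorial disjointness claim is applied on the correct side of $x$, so that the Denjoy inequality actually rules out the wandering interval rather than merely constraining it. Once no wandering interval exists, $h$ is a homeomorphism and the $C^0$ convergence $f^{q_n} \to \mathrm{id}$ follows as explained at the outset.
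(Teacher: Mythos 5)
The paper does not prove this statement: it records it as Denjoy's theorem, remarks that this form is ``equivalent'' to the classical one (absence of wandering intervals / topological conjugacy to $R_\rho$), and refers to [Na11] for details. Your proposal therefore cannot be compared to an in-paper argument; what you have written is precisely the standard route, namely (i) the equivalence the author alludes to, and (ii) a sketch of the classical Denjoy distortion argument. Part (i) is correct and complete as you state it: once $h f = R_\rho h$ with $h$ a homeomorphism, $f^{q_n} = h^{-1} R_\rho^{q_n} h \to \mathrm{id}$ uniformly because $|q_n\rho - p_n| \to 0$ and $h^{-1}$ is uniformly continuous. The combinatorial disjointness of the first $q_n$ iterates of $[f^{-q_n}(x),x]$ and the resulting bound by the total variation $V$ of $\log Df$ are also correctly identified.

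The one place where your sketch wobbles is the final contradiction, which you describe as the two lengths tending to zero ``at very different rates along appropriate subsequences.'' That is not the mechanism, and no subsequence extraction is needed. The pointwise form of the Denjoy inequality you should extract from the disjointness is $Df^{q_n}(x)\,Df^{-q_n}(x)\ \ge\ e^{-V}$ for all $x$ (compare $Df^{q_n}$ at $x$ and at $f^{-q_n}(x)$ and use the chain rule). Then for a wandering interval $I$,
$$|f^{q_n}(I)|\cdot|f^{-q_n}(I)| \;=\; \int_I Df^{q_n}\,\int_I Df^{-q_n} \;\ge\; \left(\int_I \sqrt{Df^{q_n}(x)\,Df^{-q_n}(x)}\,dx\right)^{\!2} \;\ge\; e^{-V}|I|^2 \;>\;0,$$
by Cauchy--Schwarz, whereas the disjointness of all iterates of $I$ forces both factors on the left to tend to $0$, so the product tends to $0$. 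That is the contradiction. With this correction your outline is a complete and correct (if classical) proof of the stated theorem; it is more self-contained than the paper, which simply cites the result, at the cost of reproving a standard theorem rather than isolating what the paper actually needs.
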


Although this is not the original version of Denjoy's theorem, one readily checks that it is equivalent to it. See \cite{Na11} for a full 
discussion, including a nonstandard proof for $C^{1+Lip}$ diffeomorphisms (with no control of distortion involved) obtained in 
collaboration with C. G. Tamm de Araujo Moreira (c.f. Exercise 3.1.4 therein). 

The following  is a classical result from the seminal paper \cite{He79} of Herman. 

\begin{thm} {\bf [Herman]} 
{\em If $f$ is a $C^{2}$ circle diffeomorphism of irrational rotation number $\rho$, the the sequence of iterates $f^{q_n}$ 
converges to to the identity in $C^1$ topology, where $p_n / q_n$ is the sequence of the rational approximations of $\rho$ 
obtained from its continuous fraction expansion.}
\label{herman}
\end{thm}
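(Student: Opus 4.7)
The plan is to bootstrap Theorem~\ref{denjoy} from $C^0$ to $C^1$ convergence. First, I would rewrite the desired statement as a claim about Birkhoff sums: by the chain rule,
\[
\log(f^{q_n})'(x) \;=\; S_n(x) \;:=\; \sum_{i=0}^{q_n-1}\phi(f^i(x)), \qquad \phi:=\log f',
\]
so it suffices to prove that $S_n\to 0$ uniformly on $\clo$. Note that $\phi \in C^1(\clo)$ because $f\in C^2$, and in particular $\phi$ has bounded variation.

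The next step is Denjoy's distortion estimate. By the combinatorics of an irrational rotation number, the intervals $f^i([x,y])$ for $i=0,\ldots,q_n-1$ are pairwise disjoint in $\clo$, so summing the variations of $\phi$ over this family yields $|S_n(x)-S_n(y)|\le V(\phi)$ for all $n,x,y$. Since $\int_{\clo}(f^{q_n})'\,dx=1$, the intermediate value theorem produces a point $y_n$ with $(f^{q_n})'(y_n)=1$, i.e.\ $S_n(y_n)=0$. Hence $|S_n|\le V(\phi)$ uniformly, yielding two-sided uniform bounds on $(f^{q_n})'$ and $1/(f^{q_n})'$.

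The third step extracts $L^1$ convergence from Theorem~\ref{denjoy}: for every $[a,b]\subset\clo$,
\[
\int_a^b (f^{q_n})'(t)\,dt \;=\; f^{q_n}(b)-f^{q_n}(a) \;\longrightarrow\; b-a.
\]
Combined with the uniform $L^\infty$ bound this forces $(f^{q_n})'\to 1$ weakly-$\ast$ in $L^\infty$; using Helly's selection theorem together with the uniqueness of the limit, this upgrades to pointwise almost everywhere convergence and then, by dominated convergence, to convergence in $L^1$.

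The main obstacle will be promoting this to uniform convergence, since bounded variation plus $L^1$ (or pointwise a.e.) convergence does not in general imply $C^0$ convergence. The additional ingredient I would invoke is a refined Denjoy--Koksma-type estimate, valid for $C^1$ observables having zero mean with respect to the unique $f$-invariant probability $\mu$: for such observables the Birkhoff sums along the denominators $q_n$ tend to $0$ uniformly, not merely remain bounded. Applying this to $\phi=\log f'$ will finish the proof, once I check that $\int\phi\,d\mu=0$, which follows from $|S_n|\le V(\phi)$ by dividing through by $q_n$ and appealing to unique ergodicity. The refined estimate itself is proved by induction on the continued-fraction depth, using the recurrence $q_{n+1}=a_{n+1}q_n+q_{n-1}$ to split a sum of length $q_{n+1}$ into nested sub-sums of lengths $q_n$ and $q_{n-1}$ over the return intervals $[x,f^{q_n}(x)]$; it is precisely at this inductive step that the $C^1$ regularity of $\phi$ (rather than merely bounded variation) is used, in order to upgrade mere boundedness of the oscillation at each level into decay.
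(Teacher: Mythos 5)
First, a point of calibration: the paper does not prove this statement at all --- it is quoted as a classical theorem from Herman's monograph \cite{He79} --- so your sketch has to stand on its own as a proof of a genuinely deep result, and it does not. The reduction to showing that the Birkhoff sums $S_n=\log Df^{q_n}$ of $\phi=\log Df$ tend to $0$ uniformly is correct, and so is the classical a priori bound $\|S_n\|_\infty\le \mathrm{Var}(\phi)$, although your justification of it is off: for arbitrary $x,y$ the intervals $f^i([x,y])$, $0\le i<q_n$, are certainly not pairwise disjoint (take $y-x=1/2$); disjointness holds only for dynamically defined intervals such as $f^i([x,f^{q_n}(x)])$, and the bound is correctly obtained from the Denjoy--Koksma inequality together with $\int\phi\,d\mu=0$. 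The real trouble starts afterwards. Your third step does not work: weak-$*$ convergence of $Df^{q_n}$ to $1$ against indicators, together with the two-sided $L^\infty$ bounds, does not yield $L^1$ or a.e.\ convergence (think of $1+\tfrac12\sin(2\pi nx)$), and Helly's selection theorem is not applicable because there is no a priori uniform bound on $\mathrm{Var}(Df^{q_n})$: the chain rule only gives $\mathrm{Var}(\log Df^{q_n})\le q_n\,\mathrm{Var}(\phi)$, and obtaining any uniform control on this variation is part of the difficulty of Herman's theorem, not an input to it.

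More seriously, the entire weight of the argument rests on the unproved ``refined Denjoy--Koksma estimate'': that for every $C^1$ observable $\psi$ with $\int\psi\,d\mu=0$ the sums $\sum_{i<q_n}\psi\circ f^i$ tend to $0$ uniformly. For $C^2$ diffeomorphisms this statement is a strengthening of Herman's theorem rather than a lemma on the way to it --- it is essentially the content of \cite{NT13}, cited in this very paper --- and it is already delicate for Liouville rotations, where Denjoy--Koksma gives only boundedness and the bound does not self-improve to decay for merely BV observables. Your one-sentence proof of it (split a sum of length $q_{n+1}$ into sub-sums of lengths $q_n$ and $q_{n-1}$ and ``upgrade boundedness of the oscillation into decay'' using $C^1$ regularity) identifies no mechanism actually forcing decay: the naive induction along the continued-fraction recursion merely reproduces the Denjoy--Koksma bound at every scale. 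Herman's proof, and its simplification in \cite{Na23}, exploits the specific cocycle structure of $\log Df$ (e.g.\ the exact relation $\int_{\clo} Df^{q_n}\,dx=1$ at every scale of the dynamical partition), not a general statement about zero-mean $C^1$ observables. As written, the proposal assumes a result at least as strong as the theorem it sets out to prove.
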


This theorem extends to the $C^{1+ac}$ regularity, that is, to $C^1$ diffeomorphisms whose derivatives are absolutely continuous 
(equivalently, they have a second derivative in $L^1$). This sharpened version, which involves a simplification of Herman's proof, 
appears in \cite{Na23} (see also \cite{NT13}). We do not know whether it is still true in class $C^{1+bv}$ (we suspect it is not). 

For the results below, recall that a property suitable for points in a complete metric space is {\em generic} if it satisfied on a 
{\em $G_{\delta}$ set}, that is, on a set which is the intersection of a countable family of open and dense sets. By the Baire's 
category theorem, such a property is satisfied on a dense set of points. 

Following \cite{He79}, we denote $\mathbb{F}^{\infty}$ the complementary set in $\mathrm{Diff}^{\infty}_+ (\clo)$ of the 
interior of the set of diffeomorphisms with rational rotation number. This is a closed subset of $\mathrm{Diff}^{\infty}_+(\clo)$, hence a 
complete metric space. Also by the definition, the set of diffeomorphisms with irrational rotation number is dense in $\mathbb{F}^{\infty}$, 
but a stronger property holds: this set is actually generic. We refer to \cite[Chapter III]{He79} for some elementary properties about this space.  
Also, although not explicitly stated in \cite{He79}, the following lemma (which was privately communicated to me by Yoccoz about 20 years ago) 
is ``hidden'' in Chapter XII therein. We provide a short proof for the reader's convenience.

\begin{lem} 
{\em The set $\mathcal{R}$ of diffeomorphisms $f \in \mathbb{F}^{\infty}$ with irrational rotation number for which there is an increasing 
sequence of integers $r_n$ such that $f^{r_n}$ converges to the identity in $C^{\infty}$ topology is generic in $\mathbb{F}^{\infty}$.}
\label{lem:yoccoz}
\end{lem}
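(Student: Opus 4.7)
The plan is a Baire category argument in the complete metric space $\mathbb{F}^{\infty}$: I will realize $\mathcal{R}$ as a countable intersection of open dense sets.

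For each pair $(k,m) \in \mathbb{N}^2$, define
\[
W_{k,m} := \bigl\{ f \in \mathbb{F}^{\infty} : \text{there exists } r \geq m \text{ with } d_{C^k}(f^r, \mathrm{id}) < 1/m \bigr\}.
\]
Each $W_{k,m}$ is open because, for every fixed $r$, the map $f \mapsto f^r$ is continuous in the $C^k$ topology. Set $\mathcal{R}' := \bigcap_{k,m} W_{k,m}$. A diagonal extraction shows that any $f \in \mathcal{R}'$ admits an increasing sequence of integers $r_n$ with $f^{r_n} \to \mathrm{id}$ in $C^\infty$: recursively pick $r_n \geq \max(n, r_{n-1}+1)$ witnessing $f \in W_{n, \max(n, r_{n-1}+1)}$; then $d_{C^k}(f^{r_n}, \mathrm{id}) \leq d_{C^n}(f^{r_n}, \mathrm{id}) < 1/n$ for every $k \leq n$, so indeed $f^{r_n}$ converges to the identity in $C^\infty$ topology.

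The core of the proof is density of each $W_{k,m}$ in $\mathbb{F}^\infty$. For this, I plan to exhibit a dense subset of $\mathbb{F}^\infty$ contained in $\mathcal{R}'$, namely the set $\mathcal{P}$ of diffeomorphisms of the form $f = h R_{p/q} h^{-1}$, with $h \in \mathrm{Diff}^\infty_+(\clo)$ and $p/q \in \mathbb{Q}$. For such $f$, one has $f^q = \mathrm{id}$, so $f^{nq} = \mathrm{id}$ places $f$ in every $W_{k,m}$ (choose $n$ with $nq \geq m$). Moreover $\mathcal{P} \subseteq \mathbb{F}^\infty$: any $C^\infty$-neighborhood of $R_{p/q}$ contains diffeomorphisms of arbitrary rotation number near $p/q$, so $R_{p/q}$ is not interior to the set with rational rotation number, and this property is preserved by smooth conjugation.

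The density of $\mathcal{P}$ in $\mathbb{F}^\infty$ is the main technical step and is the substance of Chapter XII of \cite{He79}. Once this is granted, Baire's theorem yields that $\mathcal{R}'$ is a dense $G_\delta$, and intersecting with the (already known) dense $G_\delta$ subset $\mathcal{I}$ of elements of $\mathbb{F}^\infty$ with irrational rotation number gives $\mathcal{R}$. The main obstacle is thus the density of $\mathcal{P}$; an alternative route that avoids direct reference to Chapter XII invokes Herman-Yoccoz linearization: using continuity of the rotation number $\rho$ and the fact that $t \mapsto \rho(f \circ R_t)$ surjects onto a full interval, perturb $f$ to $f \circ R_t$ with $\alpha := \rho(f \circ R_t)$ Diophantine; Herman-Yoccoz then provides $h \in C^\infty$ with $f \circ R_t = h R_\alpha h^{-1}$, so $(f \circ R_t)^r = h R_{r\alpha} h^{-1}$ is $C^\infty$-close to the identity whenever $r\alpha$ is close to $0 \bmod 1$, which happens for arbitrarily large $r$, placing $f \circ R_t$ in every $W_{k,m}$.
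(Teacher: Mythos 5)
Your proof is correct and follows essentially the same route as the paper's: write the set as a countable intersection of sets that are open for trivial reasons, then get density from the Herman--Yoccoz linearization of Diophantine-rotation-number diffeomorphisms, which are dense in $\mathbb{F}^{\infty}$. Your primary density step (via smooth conjugates of \emph{rational} rotations, which are periodic) is a mild variant resting on the same input from \cite{He79}, and your ``alternative route'' at the end is exactly the paper's argument.
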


\begin{proof} Let $\mathrm{dist}_{\infty}$ be the metric defining the topology of $\mathrm{Diff}^{\infty}_+(\clo)$. For each $k \geq 1$, let 
$$\mathcal{R}_{k} := \Big\{f \in \mathbb{F}^{\infty}: \lim_{\, n \,\, \to} \! \inf_{\!\! \infty} \,\, \mathrm{dist}_{\infty} (f^n, id) < 1/k \Big\} .$$
Since $\mathcal{R} = \bigcap_k \mathcal{R}_{k}$ and $\mathcal{R}_k = \bigcap_i \bigcup_{j \geq i} \mathcal{R}_{k,j}$, where 
$$\mathcal{R}_{k,j} := \Big\{ f \in \mathbb{F}^{\infty} \!: \mathrm{dist}_{\infty} (f^{j}, id) < 1/k \Big\},$$ 
it suffices to show that each set $\bigcup_{j\geq i} \mathcal{R}_{k,j}$ is open and dense in $\mathbb{F}^{\infty}$.  Now, openess of 
$\bigcup_{j\geq i}  \mathcal{R}_{k,j}$ follows directly from its definition (actually, each $\mathcal{R}_{k,j}$ is open). To show denseness, 
it suffices to note that $\bigcup_{j\geq i}  \mathcal{R}_{k,j}$ contains all diffeomorphisms with an irrational rotation number satisfying 
a Diophantine condition (which is a dense subset of $\mathbb{F}^{\infty}$). Indeed, according to the celebrated theorem of Herman 
\cite{He79} and Yoccoz \cite{Yo84}, such a diffeomorphism is conjugate to the corresponding rotation by a $C^{\infty}$ 
diffeomorphism. Now, rotations are obviously contained in $\bigcup_{j\geq i} \mathcal{R}_{k,j}$, and one readily 
checks that diffeomorphisms that are conjugate to rotations also lie in $\bigcup_{j\geq i} \mathcal{R}_{k,j}$. 
\end{proof}

In full generality, diffeomorphisms for which there is a sequence of iterates converging to the identity are called {\em rigid} (they are also called 
{\em recurrent} in specific contexts). As shown above, for the case of circle maps, these form a generic subset of $\mathbb{F}^{\infty}$. 

\begin{rem}\label{rem:all-powers} Note that if $f$ is rigid than all powers $f^m$ are also rigid. Indeed, since $\mathrm{Diff}^{\infty}_+ (\clo)$ is 
a topological group, if $f^{r_n}$ converges to the identity then so is the case of $(f^m)^{r_n} = (f^{r_n})^m$ for each nonzero 
$m \in \mathbb{Z}$. (Compare Remark \ref{rem:isom-todos}.)
\end{rem}

Despite the above, generic diffeomorphisms in $\mathbb{F}^{\infty}$ also share very bad properties along certain iterates, which reflect, 
for example, in the following important result also from \cite[Chapter XII]{He79}.

\begin{thm} {\bf [Herman]} 
{\em A generic element in $\mathbb{F}^{\infty}$ is conjugated to a rotation only by homeomorphisms that are not absolutely continuous.}
\label{thm:not-smooth}
\end{thm}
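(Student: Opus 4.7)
My plan is to reformulate ``the conjugation $h$ is absolutely continuous'' as a statement about the $f$-invariant probability measure, and then show that the corresponding set of $f$'s is meager in $\mathbb{F}^{\infty}$. For a generic $f \in \mathbb{F}^{\infty}$ the rotation number is irrational, so by Denjoy's theorem (Theorem \ref{denjoy}) there exists a conjugating homeomorphism $h_f$ with $h_f \circ f = R_{\rho(f)} \circ h_f$, unique up to composition with rotations. Up to an additive constant, $h_f$ is the cumulative distribution function of the unique $f$-invariant probability $\mu_f$; hence $h_f$ is absolutely continuous if and only if $\mu_f \ll \mathrm{Leb}$. The theorem thus reduces to showing that
$$\mathcal{A} := \{\esp f \in \mathbb{F}^{\infty} : \mu_f \ll \mathrm{Leb} \esp\}$$
is meager in $\mathbb{F}^{\infty}$.

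Next I would stratify via the usual characterization of absolute continuity,
$$\mathcal{A} \, = \, \bigcap_{k \geq 1} \bigcup_{j \geq 1} W_{k,j}, \qquad W_{k,j} := \{\esp f \in \mathbb{F}^{\infty} : \mu_f (A) \leq 1/k \,\text{ for every open } A \subset \clo \,\text{ with } |A| < 1/j \esp\}.$$
The assignment $f \mapsto \mu_f$ is weak-$\ast$ continuous on the (generic) subset of $\mathbb{F}^{\infty}$ with irrational rotation number, by unique ergodicity in that range together with compactness of the space of probabilities on $\clo$; since $\mu \mapsto \mu(A)$ is lower semicontinuous on open $A$, each $W_{k,j}$ is closed. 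It is therefore enough to find a single $k_0$ for which every $W_{k_0, j}$ has empty interior.

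The heart of the argument is this last step. Fix $k_0 = 2$, $f_0 \in \mathbb{F}^{\infty}$, a $C^{\infty}$-neighborhood $\mathcal{U}$ of $f_0$, and $j \geq 1$; I aim to produce $\tilde f \in \mathcal{U}$ together with an open set $A \subset \clo$ satisfying $|A| < 1/j$ and $\mu_{\tilde f}(A) > 1/2$, which would show $\tilde f \notin W_{k_0, j}$. First I replace $f_0$ by a nearby $f \in \mathcal{U}$ having irrational rotation number (possible by density in $\mathbb{F}^{\infty}$). Denjoy's combinatorics then provides, for a large denominator $q$ of the continued fraction of $\rho(f)$, a fundamental domain $J \subset \clo$ with $|J|$ arbitrarily small and $J, f(J), \ldots, f^{q-1}(J)$ essentially disjoint, so that $\mu_f(J) \approx 1/q$. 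I would then compose $f$ with a $C^{\infty}$-small diffeomorphism $\psi$ supported in $J$ which contracts a subinterval $J' \subset J$ by an arbitrarily large factor $M$; the first-return map to $J$ of $\tilde f := \psi \circ f$ is $C^{\infty}$-close to $\psi$, so its unique invariant probability concentrates on $\psi(J')$, whose Lebesgue measure is $\ll 1/j$ for $M$ large. Spreading this mass along the $q$ essentially disjoint $\tilde f$-iterates of $\psi(J')$ and making a last arbitrarily small adjustment to retain an irrational rotation number, one gets $\mu_{\tilde f}$ placing more than $1/2$ of its mass on an open set of Lebesgue measure $< 1/j$.

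The main obstacle is exactly this perturbation construction: one must simultaneously control the $C^{\infty}$-size of the perturbation, preserve the irrationality of the rotation number, and guarantee effective concentration of $\mu_{\tilde f}$. This is precisely the type of flexibility systematically exploited in Chapter XII of \cite{He79}, and I would borrow those techniques rather than reinvent them. Once in place, $\bigcup_j W_{k_0, j}$ is meager, hence $\mathcal{A}$ is meager, and its complement is a dense $G_{\delta}$ subset of $\mathbb{F}^{\infty}$ consisting of diffeomorphisms for which the (generically existing) topological conjugation to a rotation fails to be absolutely continuous.
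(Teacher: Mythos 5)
The paper offers no proof of this statement\,---\,it is quoted directly from Chapter XII of Herman's memoir \cite{He79}\,---\,so there is no internal argument to compare against; I can only assess your proposal on its own terms. Your reduction is the right one and is essentially how the result is established: on the generic set of irrational rotation number the conjugacy is, up to an additive constant, the distribution function of the unique invariant probability $\mu_f$, so the theorem amounts to meagerness of $\{f : \mu_f \ll \mathrm{Leb}\}$; the stratification into the sets $W_{k,j}$, the weak-$*$ continuity of $f \mapsto \mu_f$ at uniquely ergodic maps, and the portmanteau argument for closedness are all correct (with the minor caveat that $\mu_f$ is only well defined, and $W_{k,j}$ only closed, on the $G_{\delta}$ of irrational rotation number, which suffices for the Baire argument).

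The genuine gap is in the perturbation producing a point of $\mathcal{U} \setminus W_{2,j}$. A diffeomorphism $\psi$ that contracts a subinterval $J' \subset J$ by a factor $M$ satisfies $D\psi \leq 1/M$ on $J'$, hence $\|D\psi - 1\|_{C^0} \geq 1 - 1/M$; such a $\psi$ is never $C^1$-small, let alone $C^{\infty}$-small, no matter how short $J$ is. So $\tilde f = \psi \circ f$ does not stay in the prescribed neighborhood $\mathcal{U}$, and the step you flag as ``the main obstacle'' is not merely technical: as described, it cannot work. The standard repair (and what Herman actually exploits, in the same spirit as the paper's own density argument for the crossratio lemma in \S \ref{s:circle}) is to approximate $f_0$ inside $\mathcal{U}$ by some $g \in \mathbb{F}^{\infty}$ with rational rotation number $p/q$, with $g^q \geq id$ (or $\leq id$) and finitely many periodic points; every $g$-invariant probability is then supported on the finite periodic set $P$, so one fixes an open $A \supset P$ with $|A| < 1/j$ and uses upper semicontinuity of the set of invariant measures to find a smaller neighborhood $\mathcal{V} \subset \mathcal{U}$ on which every invariant probability gives $A$ mass greater than $1/2$; any $h \in \mathcal{V}$ with irrational rotation number (such maps are dense in $\mathbb{F}^{\infty}$) then lies in $\mathcal{U} \setminus W_{2,j}$. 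With this substitution for your $\psi$-construction, the architecture of your argument goes through.
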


Here is another manifestation of a similar phenomenon that will be useful for us. For the statement, recall that the {\em crossratio}  
of four cyclically ordered points $a < b < c < d < a$ on $\clo$ is defined as 
$$[a,b,c,d] : = \frac { (a-c) \, (b-d)}{(a-d) \, (b-c)}.$$
 
\begin{lem} \label{lem:quasim}
{\em The set $\mathcal{D}$ made by the diffeomorphisms $f \in \mathbb{F}^{\infty}$ of irrational rotation number such that
$$\lim_{\, \, n \,\, \to} \! \sup_{ \!\! {}^\infty} \, \max_{[a,b,c,d] = 2} \, \, [f^n(a),f^n(b),f^n(c),f^n(d)] = \infty$$
is generic in $\mathbb{F}^{\infty}$.} 
\end{lem}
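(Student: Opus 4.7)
The plan is a Baire category argument in $\mathbb{F}^{\infty}$, along the lines of Lemma 2.3. For each $k,j \geq 1$ set
\[
\mathcal{U}_{k,j} := \Big\{ f \in \mathbb{F}^{\infty} : \max_{[a,b,c,d]=2}\,[f^j(a),f^j(b),f^j(c),f^j(d)] > k \Big\},
\]
and decompose
\[
\mathcal{D} \,=\, \mathcal{R}_{\mathrm{irr}} \,\cap\, \bigcap_{k \geq 1}\bigcap_{i \geq 1}\bigcup_{j \geq i}\mathcal{U}_{k,j},
\]
where $\mathcal{R}_{\mathrm{irr}} \subset \mathbb{F}^{\infty}$ is the already-generic subset of diffeomorphisms with irrational rotation number. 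It then suffices to show that each $\bigcup_{j \geq i}\mathcal{U}_{k,j}$ is open and dense in $\mathbb{F}^{\infty}$.

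Openness of $\mathcal{U}_{k,j}$ is immediate from the continuity of $f \mapsto f^j$ in the $C^0$-topology, the continuity of the crossratio on the open stratum of 4-tuples of distinct points, and the compactness of the set of cyclically ordered 4-tuples on $\clo$ with crossratio $2$ (the constraint $[a,b,c,d]=2$ is automatically bounded away from the degeneracy locus, since the crossratio tends to $1$ or $\infty$ whenever two of the four points coalesce). Thus the maximum in the definition of $\mathcal{U}_{k,j}$ is continuous in $f$, and exceeding $k$ is an open condition.

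The essential step is density of $\bigcup_{j \geq i}\mathcal{U}_{k,j}$ in $\mathbb{F}^{\infty}$. Given $f_0 \in \mathbb{F}^{\infty}$ and a $C^\infty$-neighborhood $V$, I would produce $g \in V$ and $j \geq i$ such that $g^j$ distorts some crossratio-$2$ 4-tuple above $k$. A large image crossratio corresponds to the two middle points of the image being extremely close relative to the spread of the outer two, so $g^j$ must behave very non-M\"obiusly, with strongly uneven local stretching. The plan is to realize this by a $C^\infty$-small perturbation of $f_0$ inserting a sharp non-M\"obius feature, so that the distortion compounds under iteration and eventually dominates. One concrete realization is to approximate $f_0$ by some $g \in \partial \mathcal{O} \cap \mathbb{F}^{\infty}$ with rational rotation number $p/q$ such that $g^q$ admits a parabolic fixed point alongside a sharply non-M\"obius profile elsewhere on the same periodic orbit; iterating $g^{mq}$ accumulates the non-M\"obius effect approximately linearly in $m$, so that on a crossratio-$2$ 4-tuple suitably placed across the parabolic and steep regions, the image crossratio can be pushed past $k$ at $j = mq \geq i$. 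Keeping $g$ in $\mathbb{F}^\infty$ is arranged by a small rotational correction if the perturbation would otherwise land in the interior of the rational-rotation-number locus.

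The main obstacle is the explicit quantitative construction balancing three constraints: (i) $g$ must be $C^{\infty}$-close to $f_0$ within the tolerance defining $V$, (ii) $g$ must remain in $\mathbb{F}^{\infty}$, and (iii) the accumulated non-M\"obius distortion of $g^j$ over $j \geq i$ iterations must push some crossratio-$2$ 4-tuple above $k$. The perturbative analysis here is in the spirit of the constructions of Chapter XII of \cite{He79} underlying Theorem 2.5; once it is carried out, the Baire argument closes as in Lemma 2.3.
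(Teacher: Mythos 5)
Your reduction to showing that each $\bigcup_{j\geq i}\mathcal{U}_{k,j}$ is open and dense is exactly the right frame, and the openness part is fine (though note that for openness you only need continuity of the crossratio at one specific non-degenerate witness tuple, not compactness of the level set $\{[a,b,c,d]=2\}$ --- which in fact fails when three of the four points coalesce, so the ``max'' should really be read as a sup). The genuine gap is in the density step, which is the entire content of the lemma and which you explicitly leave as an ``obstacle.'' You do not need to construct any perturbation, let alone one with a quantitatively controlled ``sharp non-M\"obius feature'' whose distortion accumulates linearly along a parabolic orbit. The two missing ingredients are: (i) a known density statement, namely that nonperiodic diffeomorphisms with rational rotation number $p/q$, $q\geq 3$, which are not conjugate to a rotation, are dense in $\mathbb{F}^{\infty}$ (this is in Chapter 3 of \cite{He79} and goes back to \cite{Ar61}); and (ii) a soft dynamical observation that makes the crossratio blow up for such maps with \emph{no} quantitative estimates at all. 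For (ii): if $g$ is such a map and $q\geq 3$ is the period, pick a component $(u,v)$ of the complement of $\mathrm{Fix}(g^q)$ and a third fixed point $w$ of $g^q$ in $(v,u)$; choosing $a,b,c$ among $\{u,v,w\}$ appropriately and letting $d$ be the unique fourth point with $[a,b,c,d]=2$, the points $a,b,c$ are frozen by $g^{nq}$ while $g^{nq}(d)$ converges to one of them, so the factor $(a-d)$ (or $(b-c)$) in the denominator of the crossratio tends to $0$ and $[g^{nq}(a),g^{nq}(b),g^{nq}(c),g^{nq}(d)]\to\infty$. This puts $g$ in $\bigcup_{j\geq i}\mathcal{U}_{k,j}$ for every $i$ and $k$, and density follows from (i).

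In short: your strategy of approximating by rational-rotation-number maps and iterating is pointed in the right direction, but by not invoking the density of nonperiodic maps with $q\geq 3$ you are forced into an explicit perturbative construction that you do not carry out, whereas the actual mechanism is simply attraction toward a fixed point of $g^q$ degenerating a crossratio whose other three entries are pinned. As written, the proof of density is a plan rather than an argument, so the lemma is not established.
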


\begin{proof} For each $k \geq 1$, let $\mathcal{D}_k$ denote the set of $f \in \mathbb{D}^{\infty}$ 
$$\lim_{\,\, n \,\,\, \to} \! \sup_{ \!{}^\infty} \, \max_{[a,b,c,d] = 2} \, \, [f^n(a),f^n(b),f^n(c),f^n(d)] > k. $$
Since $\mathcal{D} = \bigcap_k \mathcal{D}_k$ and $\mathcal{D}_k = \bigcap_i \bigcup_{j \geq i} \mathcal{D}_{k,j}$, 
where 
$$\mathcal{D}_{k,j} := \Big\{ f \in \mathbb{F}^{\infty} \!: \max_{[a,b,c,d]=2} \, [f^j(a),f^j(b),f^j(c),f^j(d)] > k \Big\},$$
it suffices to show that each set $\bigcup_{j\geq i}  \mathcal{D}_{k,j}$ is open and dense in $\mathbb{F}^{\infty}$. Now, openess of 
each $\mathcal{D}_{k,j}$ readily follows from the definition. To show denseness of  $\bigcup_{j\geq i}  \mathcal{D}_{k,j}$, 
it suffices to check that nonperiodic diffeomorphisms of rational rotation number with denominator at least 3 which are not 
conjugate to a rotation are contained therein, because these maps are known to form a dense subset of $\mathbb{F}^{\infty}$ 
(see \cite[Chapter 3]{He79} as well as \cite{Ar61}). Let hence $f$ be such a diffeomorphism, and let $q \geq 3$ be the period 
of its periodic points. Let $(u,v)$ be a connected component of the complement of the set of fixed points of $f^q$. Then $u,v$ 
are fixed under $f^q$, and since $q \geq 3$, there exists another fixed point $w$ of $f^q$ in the interval $(v,u)$. Two cases 
are possible:
\begin{itemize}
\item If $v$ is a topologically contracting from the right for $f^q$, let $a:=v$, $b:=w$ and $c:=u$.  
Then define $d$ as being the unique point in $(c,a)$ such that $[a,b,c,d] = 2$. 
\item If $v$ is a topologically repelling from the right for $f^q$, let $a:=w$, $b:=u$ and $d:=v$.  
Then define $c$ as being the unique point in $(b,d)$ such that $[a,b,c,d] = 2$. 
\end{itemize}
In the former case, since $a,b,c$ are fixed points of $f^q$, while $f^{nq} (d) \to a$ as $n \to \infty$, we have 
$$[ f^{nq}(a), f^{nq}(b), f^{nq}(c), f^{nq}(d)]  \to \infty \quad \mbox{ as } \quad n \to \infty,$$
thus showing that $f$ lies in $\bigcup_{j\geq i}  \mathcal{D}_{k,j}$. The later case can be treated similarly. 
\end{proof}


\section{Examples of fixed-point-free recurrent isometries}
\label{s:banachs}

\subsection{An example in $C^0 (\clo)$}
\label{s:C0}

Crucially, we will use below the chain rule (cocycle identity) for this {\em logarithmic derivative} of $C^1$ maps: 
\begin{equation}\label{eq:log}
\log D (g_1 \circ g_2) = \log Dg_2 + (\log D g_1) \circ g_2.
\end{equation}

Fix any $C^{1+ac}$ circle diffeomorphism $f$ with irrational rotation number $\rho$ that is not $C^1$ conjugate to the corresponding rotation (the 
existence of which is vastly ensured by Theorem~\ref{thm:not-smooth} above). Consider the Koopman operator $\Theta = \Theta_f$ on $C^0(\clo)$ 
given by $\varphi \mapsto \varphi \circ f$ (this is a linear isometric opeator). Let $c \in C^0 (\clo)$ be the continuous function $x \mapsto \log Df (x)$. 
Let $I  := \Theta + c$ be the associated affine isometry of $C^0(\clo)$. We claim that $I$ has no fixed point, yet one has the convergence of 
$I^{q_n}$ to the identity as $n \to \infty$ (where $p_n/q_n$ is the rational approximation of $\rho$ coming from its continuous fraction expansion). 

To see that $I^{q_n}$ converges to the identity, note that the chain rule again easily yields 
$$I^n (\varphi) = \varphi \circ f^n + \sum_{i=0}^{n-1} \log Df \circ f^i = \varphi \circ f^n + \log Df^n = \Theta^n (\varphi) + c_n.$$ 
By Theorem \ref{denjoy}, $f^{q_n}$ uniformly converges to the identity, hence $\Theta^{q_n} (\varphi) = \varphi \circ f^{q_n}$ converges 
to $\varphi$ for each fixed $\varphi \in C^0 (\clo)$. By Theorem \ref{herman}, $c_{q_n} = \log Df^{q_n}$ uniformly converges to zero.

Next, asume that $I$ has a fixed point $\varphi \in C^0 (\clo)$. Then $I (\varphi) = \varphi$ translates into 
\begin{equation}\label{eq:fi}
\varphi \circ f = \varphi + \log Df.
\end{equation}
Consider the $C^1$ circle diffeomorphism $h$ defined as 
$$h (x) := \frac{\int_0^x e^{- \varphi (s)} ds}{\int_0^1 e^{-\varphi(s)}ds} .$$
Then for a certain constant $\kappa$ we have \, $\log Dh = -\varphi  + \kappa$. \,Thus, (\ref{eq:fi}) translates into 
$$- \log Dh \circ f = - \log Dh + \log Df,$$
which, by the chain rule, can be rewriten as 
$$0 = \log Dh \circ f + \log Df - \log Dh = \log D (hfh^{-1}) \circ h^{-1}.$$
Thus, $D (hfh^{-1})$ identically equals 1, which means that it is a rotation. 
However, this contradicts the fact that $f$ is not $C^1$ conjugate to a rotation.


\subsection{An example in $L^1 (\clo)$}
\label{s:L1}

We will now use the {\em affine derivative} $D \log Df = D^2 f / Df$ and its associated chain rule 
\begin{equation}\label{eq:affine}
\frac{D^2 (g_1 \circ g_2)}{D (g_1 \circ g_2)} = \frac{D^2 g_2}{D g_2} + \left( \frac{D^2 g_1}{D g_1} \right) \circ g_2 \cdot Dg_2.
\end{equation}
Note that this cocycle identity follows from (\ref{eq:log}) just by taking derivatives.

Now fix any $C^2$ circle diffeomorphism $f$ with irrational rotation number that is not Lipschitz conjugate to the corresponding rotation and for 
which there exists an increasing sequence of integers $r_n$ such that $f^{r_n}$ converges to the identity in $C^2$ topology (the existence 
of such an $f$ is ensured by Lemma \ref{lem:yoccoz}  and Theorem \ref{thm:not-smooth} above). Consider the linear isometric operator 
(regular representation) $\Theta = \Theta_f$ on $L^1 (\clo)$ given 
by $\psi \mapsto \psi \circ f \cdot Df$. Let $c \in L^1 (\clo)$ be the continuous (hence integrable) function $x \mapsto \log D^2 f  (x) / D f(x)$, 
and let $I  := \Theta + c$ be the associated affine isometry of $L^1 (\clo)$. Again, we claim that $I$ has no fixed point, yet one has the convergence 
of $I^{r_n}$ to the identity as $n \to \infty$.

To see that $I^{r_n}$ converges to the identity, note that the chain rule (\ref{eq:affine}) now yields 
$$I^n (\psi) 
= \psi \circ f^n \cdot Df^n + \sum_{i=0}^{n-1} \left( \frac{D^2 f }{Df} \right) \circ f^i \cdot Df^i 
= \psi \circ f^n \cdot Df^n + \frac{D^2 f^n}{D f^n} = \Theta^n (\psi) + c_n.$$ 
On the one hand, since $f^{r_n}$ converges to the identity in $C^1$ topology, $\Theta^{r_n} (\psi) = \psi \circ f^{r_n} \cdot Df^{r_n}$ 
converges to $\psi$ for each fixed $\psi \in L^1 (\clo)$. Indeed, this is easy to check for a continuous $\psi$, and readily extends to 
$L^1$ since $\Theta$ is isometric. On the other hand, since $f^{r_n}$ converges to the identity in $C^2$ topology, 
the sequence $c_{r_n} = D^2f^{r_n} / Df^{r_n}$ converges uniformly (hence in $L^1$) to zero.

Next, asume that $I$ has a fixed point $\psi \in L^1(\clo)$. Then $I (\psi) = \psi$ translates into 
\begin{equation}\label{eq:fi2}
\psi \circ f \cdot Df = \psi + \frac{D^2 f}{D f}.
\end{equation}
Consider the circle map $h$ defined as 
$$h (x) := \frac{\int_0^x e^{- \int_0^s \psi (t) dt} ds}{\int_0^1 e^{-\int_0^s \psi(t) dt}ds} .$$
It is easy to see that $h$ is a $C^1$ diffeomorphism except perhaps at the point $0 \sim 1$, where lateral derivatives exist 
(and are nonzero) but may be different (see Remark \ref{rem:no-disc} on this).  Actually, on $(0,1)$, the map $h$ is $C^{1+ac}$, 
with $D^2 h / Dh = - \psi$. 
Equation (\ref{eq:fi2}) then translates into 
$$- \frac{D^2 h}{Dh} \circ f \cdot Df = - \frac{ D^2h}{Dh} + \frac{D^2 f}{Df},$$
which, by the chain rule (\ref{eq:affine}), can be rewriten as 
$$0 
= \frac{D^2 h}{Dh} \circ f \cdot Df + \frac{D^2 f}{D f} - \frac{D^2 h}{Dh} 
= \frac{D^2 (hfh^{-1}) }{D (hfh^{-1}) } \circ h^{-1} \cdot Dh^{-1} .$$
Thus, $D^2 (hfh^{-1})$ equals 0 a.e., which means that it is a rotation. 
However, this contradicts the fact that $f$ is not Lipschitz conjugate to a rotation.

\begin{rem} \label{rem:no-disc}
A priori, the conjugacy $h$ above may fail to be $C^1$ at $0 \sim 1$ because of the nonvanishing of the integral of $\psi$. 
However, this cannot occur, because the set of continuity points of $Dh$ is invariant under $f$. Another way to see this is by 
using a classical argument from \cite{He79} which implies that every Lipschitz conjucacy of a $C^2$ circle diffeomorphism of 
irrational rotation number to the corresponding rotation is actually smooth (see \cite{Na07} for further developments on this). 
\end{rem}


\subsection{An example in $L^2 (\clo \times \clo)$}
\label{s:L2}

We will now use a kind of projective derivative associated to a circle diffeomorphism $f$. 
Concretely, this is the function $c_f$ defined a.e. on $\clo \times \clo$ as 
$$c_f (x,y) = \frac{\sqrt{ Df(x) Df (y) } }{\mathrm{dist} (f(x),f(y))} - \frac{1}{\mathrm{dist} (x,y)}.$$
The cocycle relation is 
\begin{equation}\label{eq:proy}
c_{g_1 \circ g_2} (x,y) = c_{g_2} (x,y) + c_{g_1} (g_2 (x), g_2 (y)) \cdot \sqrt{Dg_2 (x) \, Dg_2 (y)}.
\end{equation} 
One can show that $c_f$ belongs to $L^2 (\clo \times \clo)$ for all $C^2$ diffeomorphism $f$. 
Moreover, an elementary estimate shows that there exists a constant $C$ such that 
\begin{equation}\label{eq:estimate}
\| c_f \|_{L^2} \leq C \| D^2 f \|_{C^0} 
\,\, \mbox{ holds provided } \,\, \|Df - 1\|_{C^0} \leq 1/2
\end{equation}
(see for instance \cite[Proposition 2.1]{Na02}). 

Now fix any $C^2$ circle diffeomorphism $f$ with irrational rotation number such that
\begin{equation}\label{eq:quasym-dist}
\lim_{\, \, n \,\, \to} \! \sup_{ \!\! {}^\infty} \, \max_{[a,b,c,d] = 2} \, \, [f^n(a),f^n(b),f^n(c),f^n(d)] = \infty
\end{equation}
(c.f. Lemma \ref{lem:quasim})
and for which there exists an increasing sequence of integers $r_n$ such that $f^{r_n}$ converges to the identity in $C^2$ topology (c.f. 
Lemma \ref{lem:yoccoz}). Consider the linear isometry (regular representation) $\Theta = \Theta_f$ of $L^2 (\clo \times \clo)$ 
given by 
$$\Theta (\chi) (x,y) := \chi (f(x), f(y)) \cdot \sqrt{Df(x) Df(y)}.$$ 
For $c=c_f$, consider the affine isometry of $L^2 (\clo \times \clo)$ given by $I:= \Theta +c$. 
Again, we claim that $I$ has no fixed point, yet one has the convergence of $I^{r_n}$ to the identity as $n \to \infty$.

To see that $I^{q_n}$ converges to the identity, note that the cocycle relation (\ref{eq:proy}) now yields 
\begin{eqnarray*}
I^n (\chi) (x,y) 
&=& \chi (f^n (x), f^n (y)) \cdot \sqrt{Df^n (x) Df^n (y)} + \sum_{i=0}^{n-1} c ( f^i (x) , f^i (y) ) \sqrt{Df^i (x) Df^i (y)} \\
&=&   \chi (f^n (x), f^n (y)) \cdot \sqrt{Df^n (x) Df^n (y)} + c_{f^n} (x,y), 
\end{eqnarray*}
hence
$$I^n (\chi) = \Theta^n (\chi) + c_{f^n}.$$
On the one hand, since $f^{r_n}$ converges to the identity in $C^1$ topology, we have that $\Theta^{r_n} (\chi)$ converges to 
$\chi$ for each fixed $\chi \in L^2 (\clo \times \clo)$ (again, this is easy to check for continuous $\chi$, and readily extends to the 
whole space $L^2$). On the other hand, since $f^{r_n}$ converges to the identity in $C^2$ topology, the inequality (\ref{eq:estimate}) 
holds for $f^{r_n}$ provided $n$ is large enough. Obviously, this shows that $\| c_{f^{r_n}} \|_{L^2} \leq C \| D^2 f^{r_n} \|_{C^0}$ 
converges to 0.

Next, asume that $I$ has a fixed point $\chi \in L^2 (\clo \times \clo)$. Then $I (\chi) = \chi$ translates into 
$$\chi (f(x), f(y)) \cdot \sqrt{Df(x) Df(y)} = \chi (x,y) + c_f (x,y),$$ 
that is 
\begin{equation}\label{eq:fi3}
\left[ \frac{1}{\mathrm{dist} (f(x),f(y))} - \chi (f(x), f(y)) \right]^2  \cdot Df(x) Df(y) = \left[ \frac{1}{\mathrm{dist}(x,y)} - \chi (x,y) \right]^2 
\end{equation}
for a.e. $(x,y) \in \clo \times \clo$. The contradiction is provided by the next lemma from \cite{Na05} 
(for the reader's convenience, we give a short proof of it).

\begin{lem} \label{lem:incompatibles}
{\em Conditions (\ref{eq:quasym-dist}) and (\ref{eq:fi3}) are incompatible for a circle diffeomorphism 
$f$ (and a function $\chi \in L^2 (\clo \times \clo)$).} 
\end{lem}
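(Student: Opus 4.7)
The plan is to reinterpret the fixed-point equation (\ref{eq:fi3}) as the invariance of a positive measure on $\clo \times \clo$, and then use the crossratio structure of that measure to contradict (\ref{eq:quasym-dist}). Set $\phi(x,y) := 1/\mathrm{dist}(x,y) - \chi(x,y)$. Equation (\ref{eq:fi3}) reads $\phi(f(x),f(y))^2\, Df(x)\, Df(y) = \phi(x,y)^2$ almost everywhere, and iterating by the chain rule yields the same identity with $f^n$ in place of $f$. Equivalently, the (infinite) positive measure $d\mu := \phi(x,y)^2\, dx\, dy$ on $\clo\times\clo$ is invariant under the diagonal action $(x,y) \mapsto (f(x), f(y))$.

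Next, for two disjoint arcs $[a,b]$ and $[c,d]$ on $\clo$ lying within a common half-circle, a direct antidifferentiation gives the key identity
$$\int_a^b \int_c^d \frac{dy\, dx}{\mathrm{dist}(x,y)^2} = \log [a,b,c,d].$$
Expanding $\phi^2 = 1/\mathrm{dist}^2 - 2\chi/\mathrm{dist} + \chi^2$ and integrating over $R := [a,b]\times[c,d]$ gives
$$\mu(R) = \log[a,b,c,d] - 2\!\int_R \frac{\chi}{\mathrm{dist}} + \int_R \chi^2.$$
Applying this both to $R$ (with $[a,b,c,d]=2$) and to $R_n := [f^n(a), f^n(b)] \times [f^n(c), f^n(d)]$ and using $\mu(R)=\mu(R_n)$, we obtain, with $L_n := \log[f^n(a), f^n(b), f^n(c), f^n(d)]$,
$$L_n = \log 2 + 2\!\int_{R_n}\! \frac{\chi}{\mathrm{dist}} - 2\!\int_R \frac{\chi}{\mathrm{dist}} + \int_R \chi^2 - \int_{R_n}\chi^2.$$

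The decisive step is Cauchy--Schwarz combined with the same identity:
$$\Big|\int_{R_n} \frac{\chi}{\mathrm{dist}}\Big| \leq \|\chi\|_{L^2}\cdot \Big(\int_{R_n} \frac{dy\,dx}{\mathrm{dist}(x,y)^2}\Big)^{1/2} = \|\chi\|_{L^2}\sqrt{L_n},$$
while the corresponding integral over $R$ is bounded by $\|\chi\|_{L^2}\sqrt{\log 2}$ and the $\chi^2$ integrals by $\|\chi\|_{L^2}^2$. This produces
$$L_n \leq K + M\sqrt{L_n}$$
for constants $K, M$ depending only on $\|\chi\|_{L^2}$, independent of $n$ and of the configuration $(a,b,c,d)$ with $[a,b,c,d]=2$. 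Hence $L_n$ is uniformly bounded, directly contradicting (\ref{eq:quasym-dist}).

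The main obstacle I anticipate is the integration identity on the circle: strictly it is valid on $\mathbb{R}$ and requires the two arcs to lie within a common half-circle so that circle distance coincides with linear distance on a suitable lift. On $\clo$ it holds up to a bounded additive correction coming from wrap-around, which does not disturb the structure $L_n \leq K + M\sqrt{L_n}$; moreover, configurations producing large $L_n$ force the middle gap $(f^n(b), f^n(c))$ to be small compared to the other gaps, so the rectangle $R_n$ is automatically localized and the linear identity is essentially exact there. This is enough for the contradiction to survive.
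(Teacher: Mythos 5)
Your proof is correct and follows essentially the same route as the paper's: both rest on the identity $\log[a,b,c,d]=\int_a^b\int_c^d \mathrm{dist}(x,y)^{-2}\,dx\,dy$, the $f$-invariance of the density $\left(1/\mathrm{dist}-\chi\right)^2$ coming from iterating (\ref{eq:fi3}), and the global $L^2$ bound on $\chi$. The only difference is bookkeeping: the paper applies the triangle (Minkowski) inequality twice to get $\sqrt{L_n}\le \sqrt{\log 2}+2\|\chi\|_{L^2}$ directly, whereas you expand the square and use Cauchy--Schwarz to reach the equivalent bound $L_n\le K+M\sqrt{L_n}$.
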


\begin{proof} We will strongly use the following elementary yet crucial formula valid for all $a<b<c<d<a$ on $\clo$ 
(see \cite{Bo88} for details): 
\begin{equation}  \log ([a,b,c,d]) 
=  \int_a^b \int_c^d \frac{dx \, dy}{\mathrm{dist}^2 (x,y)} .
\label{eq:fi4} 
\end{equation}

By the cocycle property, equality (\ref{eq:fi3}) implies that 
$$
\left[ \frac{1}{\mathrm{dist} (f^n(x),f^n(y))} - \chi (f^n(x), f^n(y)) \right]^2  \cdot Df^n(x) Df^n(y) = \left[
 \frac{1}{\mathrm{dist}(x,y)} - \chi (x,y) \right]^2 
$$
holds for all $n \geq 1$ (and a.e. $(x,y)$). If $[a,b,c,d]= 2$, then this combined with (\ref{eq:fi4}) and 
the triangle inequality (applied twice) yield that 
$$
 \log ([f^n(a),f^n(b),f^n(c),f^n(d)])^{1/2} 
= \left[ \int_{a}^{b} \int_{c}^{d} \frac{Df^n (x) Df^n (y)}{\mathrm{dist}^2(f^n(x),f^n(y))} \right]^{1/2}
$$
is smaller than or equal to 
\begin{small}
\begin{eqnarray*}
&& \hspace{-1.9cm} \left[ \int_{a}^{b}\int_c^d  \left[ \frac{1}{\mathrm{dist} (f^n(x),f^n(y))} - \chi (f^n(x), f^n(y)) \right]^2 Df^n(x) Df^n(y) \right]^{1/2} \\
         && \hspace{3.5cm}+  \left[ \int_{a}^{b} \int_c^d \chi^2 (f^n(x), f^n(y)) Df^n(x) Df^n(y) \right]^{1/2} \\
&=& \left[ \int_{a}^{b}\int_c^d \left[ \frac{1}{\mathrm{dist} (x,y)} - \chi (x, y) \right]^2 \right]^{1/2}
            +  \left[ \int_{f^n(a)}^{f^n(b)} \int_{f^n(c)}^{f^n(d)} \chi^2 (x,y) \right]^{1/2} \\
&\leq &
\left[ \int_{a}^{b}\int_c^d \left[ \frac{1}{\mathrm{dist} (x,y)} - \chi (x, y) \right]^2 \right]^{1/2}   +  \| \chi \|_{L^2} \\
&\leq& 
\left[ \int_{a}^{b}\int_c^d  \frac{1}{\mathrm{dist}^2 (x,y)}  \right]^{1/2} 
     + \left[ \int_{a}^{b}\int_c^d  \chi^2 (x,y)  \right]^{1/2} 
            +  \| \chi \|_{L^2} \\
&\leq& 
 \big| \log ([a,b,c,d] )\big|^{1/2}  + 2 \| \chi \|_{L^2}  \\
&=&  | \log (2) |^{1/2} +  2 \| \chi \|_{L^2} .
\end{eqnarray*}
\end{small}Therefore, 
\begin{equation}\label{eq:arriba}
[f^n(a),f^n(b),f^n(c),f^n(d)]
\leq 
e^{\big( | \log (2) |^{1/2} +  2 \| \chi \|_{L^2} \big)^2},
\end{equation}
which is in contradiction to (\ref{eq:quasym-dist}). 
\end{proof}

\vspace{0.1cm}

\begin{rem} \label{rem:incompatibles}
A minoration argument in  the triangle inequality above shows that (\ref{eq:fi3}) not only implies (\ref{eq:arriba}) but also 
\begin{equation}\label{eq:abajo}
[f^n(a),f^n(b),f^n(c),f^n(d)]
\geq 
e^{\big( | \log (2) |^{1/2} -  2 \| \chi \|_{L^2} \big)^2.}.
\end{equation}
\end{rem}

\begin{rem} The construction above is maleable for other $L^p$ spaces. More precisely, 
consider the regular representation 
$$\Theta_p (\chi) (x,y) := \chi (f(x), f(y)) \cdot \sqrt[^{1/p}]{Df(x) Df(y)}$$ 
and the cocycle 
$$c_p (x,y) := \frac{\sqrt[^{1/p}]{ Df(x) Df (y) } }{\mathrm{dist}^{1/p} (f(x),f(y))} - \frac{1}{\mathrm{dist}^{1/p} (x,y)}.$$
Now consider the affine isometry $I_p := \Theta_p + c_p$ of $L^{p} (\clo \times \clo)$. All arguments above work for $1 < p < \infty$ 
provided $f$ is of class $C^2$. The case $p=1$ is quite interesting. One can readily show that $c_1$ lies in $L^1 (\clo \times \clo)$ 
provided $f$ is in $\mathrm{Diff}^{2+\alpha}_+(\clo)$ for some $\alpha > 0$, but this is no longer true for general $C^2$ diffeomorphisms. 
(Producing an explicit example of failure of integrability of $c_1$ for a $C^2$ diffeomorphism is a good exercise.)
This becomes particularly transparent for $C^3$ diffeomorphisms. Indeed, for such maps, $c_1$ allows retrieving the 
Schwarzian derivative $Sf$: 
$$\lim_{y \to x} c_{1} (x,y) 
= \lim_{y \to x} \left[ \frac{\sqrt{ Df(x) Df (y) } }{\mathrm{dist} (f(x),f(y))} - \frac{1}{\mathrm{dist} (x,y)} \right] 
= \frac{Sf (x)}{6} = \frac{1}{6} \left[ \frac{D^3 f}{D f} - \frac{3}{2} \left( \frac{D^2 f}{D f} \right)^2 \right] .$$
Moreover, the cocycle relation (\ref{eq:proy}) translates into the classical chain rule 
$$S (g_1 \circ g_2) = S (g_2) + S (g_1) \circ g_2 \cdot (Dg_2)^2.$$
See \cite{Na06} for more on this. 
\end{rem}


\section{Adding commutativity}
\label{s:com} 

The affine isometries we have just constructed are fixed-point free yet recurrent for a generic subset of $\mathbb{F}^{\infty}$. 
Due to the cocycle property of the logarithmic, affine and projective derivative, in order to build commuting such isometries,  
we need to ensure that some of these diffeomorphisms have a large centralizer and many elements therein satisfy analogous 
properties, as for instance rigidity. We explain below the necessary adjustments (with an increasing level of difficulty) for each case. 
Below, we will use without much details the fact that the centralizer of every $C^{1+bv}$ circle diffeomorphism of irrational rotation 
number is topologically conjugate to the group of rotations, hence Abelian (this is a consequence of Denjoy's theorem together with 
the fact that the centralizer of an irrational rotation is the group of rotations). Moreover, this conjugacy is unique up to composition 
with a rotation, hence its degree of regularity is well defined.

\subsection{\bf The $C^0$ case.}

We fix a diffeomorphism $f \in \mathbb{F}^{\infty}$ of irrational rotation number that is rigid (c.f. Lemma \ref{lem:yoccoz}) and for which 
the conjugacy to the corresponding rotation is not $C^1$ (c.f. Lemma \ref{thm:not-smooth}). Let us consider the $C^{\infty}$ closure 
$\overline{ \langle f \rangle }$ of the group generated by $f$. This is an  Abelian topological group contained in the $C^{\infty}$ centralizer 
of $f$. Moreover, it has no isolated point, hence its cardinality is that of the continuum. Let $f_1,f_2,\ldots$ be a sequence of independent 
elements therein so that they generate a group isomorphic to $\mathbb{Z}^{\infty}$. For each $i \geq 1$, let $I_i := \Theta_i + c_i$ be the 
isometry of $C^0 (\clo)$ defined by $I_i (\varphi) = \varphi \circ f_i + \log Df_i$. Property (\ref{eq:log}) gives the commutativity relation 
$I_i I_j = I_j I_i$ for all $i,j$, and looking only at the linear part it is easy to see that $I_1,I_2,\ldots$ freely generate a group $\mathcal{I}$ 
of isometries of $C^0(\clo)$ isomorphic to $\mathbb{Z}^{\infty}$. Every element of $\mathcal{I}$ is of the form $I_{g}$ for a certain 
$g \in \langle f_1,f_2,\ldots \rangle$, where $I_g (\varphi) = \varphi \circ g + \log Dg$. Using Theorem \ref{herman} one then concludes that 
$I(g^{q_n})$ converges to the identity for every nontrivial $g$, where $q_n$ is the sequence of denominators of the continuous fraction 
approximation of the rotation number of $g$ (which is necessarily irrational). Finally, none of the $I(g)$ (with $g$ nontrivial) has a fixed 
point, otherwise, arguing as in \S \ref{s:C0}, such a $g$ would be $C^1$ conjugate to the rotation, hence $f$ would also be (by the same 
conjugating map). 

\begin{rem} A deep theorem of Yoccoz \cite{Yo84} establishes that, for a generic $f \in \mathbb{F}^{\infty}$, the closure 
$\overline{\langle f \rangle}$ coincides with the $C^{\infty}$ centralizer of $f$. Although this is not used neither in the 
proof above nor in those of the subsequences sections, it helps to clarify the context we are dealing with.
\end{rem}

\subsection{\bf The $L^1$ case.}

The construction above just requires $C^1$ convergence to the identity of a sequence of iterates, which is always ensured by 
Theorem \ref{herman}. Here, however, we need $C^2$ convergence, which only holds generically. Thus, if we start with an 
element $f$ with a large centralizer, it may happen that some of the elements of $\overline{\langle f \rangle}$ fail to be rigid. 
Let us state this as an explicit question of a certain independent interest (we suspect an affirmative answer, yet it seems hard 
to produce explicit examples). 

\begin{qs} {\em Does there exist a rigid diffeomorphism $f \in \mathbb{F}^{\infty}$ for which there is 
a nonrigid $g \in \overline{ \langle f \rangle }$~?}
\end{qs}

The idea to bypass this problem is, essentially, to show that the hypothetical situation thus described does not 
arise generically in an appropriate context. 

\begin{prop} \label{lem:mas}
{\em There exists a sequence $f_1,f_2,\ldots$ of commuting elements in $\mathbb{F}^{\infty}$ with irrational rotation number 
that freely generate a group isomorphic to $\mathbb{Z}^{\infty}$ such that each $g \in \langle f_1,f_2,\ldots \rangle$ is rigid and non $C^1$ 
conjugate to a rotation if nontrivial.}
\end{prop}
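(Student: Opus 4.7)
The plan is to build $f_1, f_2, \ldots$ iteratively inside the $C^{\infty}$-centralizer $Z(f_1)$ of a generic seed $f_1 \in \mathbb{F}^{\infty}$, and to use a Baire category argument in this centralizer to arrange rigidity of every nontrivial integer combination. The seed $f_1$ is chosen in the intersection of the generic sets of Lemma \ref{lem:yoccoz} and Theorem \ref{thm:not-smooth}, so it is rigid and not $C^1$ conjugate to a rotation.

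A preliminary observation is that the non-$C^1$-conjugacy property propagates automatically within the centralizer: if $g \in Z(f_1)$ has irrational rotation number and were $C^1$ conjugate to $R_{\alpha}$ via some $\tilde{h}$, then $\tilde{h} f_1 \tilde{h}^{-1}$ would commute with $R_{\alpha}$ (since $f_1 \in Z(g)$ and $Z(g)$ is abelian) and therefore be a rotation, contradicting the choice of $f_1$. So throughout the construction only rigidity and $\mathbb{Q}$-independence of the rotation numbers need to be engineered.

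Proceed by induction. Suppose commuting $f_1, \ldots, f_k \in Z(f_1) \cap \mathbb{F}^{\infty}$ have been built, with irrational rotation numbers that are $\mathbb{Q}$-independent together with $1$, and with every nontrivial integer combination rigid. Seek $f_{k+1}$ in the Polish space $X_k$ of $g \in Z(f_1) \cap \mathbb{F}^{\infty}$ whose rotation number is irrational and $\mathbb{Q}$-independent from $\{\rho(f_1),\ldots,\rho(f_k),1\}$ (a dense $G_{\delta}$ by a standard argument avoiding countably many affine hyperplanes in the parameter $\rho$). For each nontrivial $w = (n_1,\ldots,n_{k+1}) \in \mathbb{Z}^{k+1}$ with $n_{k+1} \neq 0$, let
\[
C_w := \{ g \in X_k : f_1^{n_1} \cdots f_k^{n_k} g^{n_{k+1}} \mbox{ is rigid} \}.
\]
By the proof of Lemma \ref{lem:yoccoz}, each $C_w$ is a $G_{\delta}$ subset of $X_k$; if each $C_w$ is moreover dense, then $\bigcap_w C_w$ is comeager by the Baire theorem, and any element serves as $f_{k+1}$, continuing the induction.

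The main obstacle is the density of each $C_w$, which is where the hint that ``the hypothetical situation does not arise generically'' is cashed in. Here one invokes the theorem of Yoccoz (noted in the paper following Lemma \ref{lem:yoccoz}) that $\overline{\langle f_1 \rangle}$ coincides with $Z(f_1)$ for generic $f_1$, so $\langle f_1 \rangle$ is $C^{\infty}$-dense in $Z(f_1)$. All powers of $f_1$ are rigid by Remark \ref{rem:all-powers}, so rigid elements are dense in $Z(f_1)$, and so is the translated set $A \cdot \{ g \mbox{ rigid} \}$ for any fixed $A = f_1^{n_1} \cdots f_k^{n_k}$. To transfer this density back to $X_k$ along the map $g \mapsto A g^{n_{k+1}}$, one uses local surjectivity of the $n_{k+1}$-th power map in the abelian topological group $Z(f_1)$ near a nontorsion point, which follows from the topological conjugacy (via the Denjoy conjugacy $h$) of $Z(f_1)$ to a subgroup of rotations, where $n_{k+1}$-th roots always exist: given a rigid target close to $A g_0^{n_{k+1}}$, extracting an $n_{k+1}$-th root yields $g \in X_k$ close to $g_0$ with $A g^{n_{k+1}}$ rigid, proving density. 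This is the technical heart of the argument; once it is in place the Baire machinery runs as stated and produces the desired sequence.
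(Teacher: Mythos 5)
Your overall architecture --- induction on the number of generators, writing the set of admissible $f_{k+1}$ as a countable intersection of $G_{\delta}$ sets of the form $\{ g : f_1^{n_1}\cdots f_k^{n_k} g^{n_{k+1}} \mbox{ is rigid}\}$, and concluding by Baire --- is exactly the paper's, and your preliminary observation that non-$C^1$-conjugacy propagates through the (abelian) centralizer is correct and is what the paper uses. However, the step you yourself call ``the technical heart,'' namely the density of each $C_w$, has a genuine gap, in two places. First, you invoke Yoccoz's theorem that $\overline{\langle f_1\rangle}$ equals the full $C^{\infty}$ centralizer for generic $f_1$; the paper explicitly remarks that this deep result is \emph{not} needed for the proof. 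Second, and more seriously, to pull the density of rigid elements back along the map $g \mapsto A g^{n_{k+1}}$ you need the $n_{k+1}$-th power map on the centralizer to be open (or to admit continuous local sections) for the $C^{\infty}$ topology. Your justification --- that the centralizer is topologically conjugate to a group of rotations, where roots always exist --- does not deliver this: the conjugating map $h$ is merely a circle homeomorphism (by the very choice of $f_1$ it is not even $C^1$), so $g \mapsto h g h^{-1}$ is not a homeomorphism from the centralizer with its $C^{\infty}$ topology onto its image in $SO(2)$; moreover that image is just some subgroup of $SO(2)$, which need not be divisible, so even the algebraic existence of an $n_{k+1}$-th root inside the centralizer is not guaranteed, let alone a root that is $C^{\infty}$-close to $g_0$. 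As written, the density of $C_w$ is not established.

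The paper sidesteps all of this by running the Baire argument inside $\overline{\langle f_1,\ldots,f_d\rangle}$ rather than in the whole centralizer: for $g$ in the countable dense subgroup $\langle f_1,\ldots,f_d\rangle$, the element $f_1^{m_1}\cdots f_d^{m_d} g^{m}$ again belongs to $\langle f_1,\ldots,f_d\rangle$ and is therefore rigid by the induction hypothesis, so each of the $G_{\delta}$ sets automatically contains a dense subset. No openness of the power map and no appeal to Yoccoz's theorem is required. If you replace your ambient space $X_k$ by $\overline{\langle f_1,\ldots,f_k\rangle}$ and make this containment observation (keeping your cardinality or rotation-number argument to pick $f_{k+1}$ generating a free factor), your proof closes along the paper's lines.
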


\begin{proof} We let $f_1$ be an element of $\mathbb{F}^{\infty}$ for which the conjugacy to the rotation is not $C^1$ (c.f. Lemma 
\ref{thm:not-smooth}) and which is rigid (c.f. Lemma \ref{lem:yoccoz}) as well as all of its powers (c.f. Remark \ref{rem:all-powers}). 
Below we show by induction the existence of $f_2,f_3,\ldots$ such that $f_1,\ldots,f_d$ freely generate a copy of $\mathbb{Z}^d$ 
and each $g \in \langle f_1, \ldots , f_d \rangle$ is rigid. The fact that such a nontrivial  $g$ is not $C^1$-conjugate to a rotation 
will then follow from the uniqueness of the conjugacy (up to rotations), as explained above.

The inductive step is carried out by showing that, starting with $f_1,\ldots,f_d$, a generic element in $\overline{\langle f_1, \ldots, f_d \rangle}$ 
satisfies the desired properties. To see this, for each $k \geq 1$ and all $m_1, \ldots, m_d, m$ in $\mathbb{Z}$, consider the set 
$$\mathcal{R}^{m_1,\ldots,m_d,m}_k = \bigcap_{i \geq 1} \bigcup_{j \geq i}  \Big\{ g \in \overline{\langle f_1, \ldots , f_d \rangle} \! : 
\, \mathrm{dist}_{\infty} ( (f_1^{m_1} \cdots f_d^{m_d} g^m )^j , id ) < 1 / k \Big\}.$$
Since each set \, 
$ \big\{ g \in \overline{\langle f_1, \ldots , f_d \rangle} \! : 
\mathrm{dist}_{\infty} ( (f_1^{m_1} \cdots f_d^{m_d} g^m )^j , id ) < 1 / k \big\}$ \, 
is open, the intersection 
\begin{equation}\label{eq:inter}
\bigcap_{m_1} \ldots \bigcap_{m_d} \bigcap_{m} \bigcap_{k \geq 1} \mathcal{R}^{m_1,\ldots,m_d,m}_k
\end{equation}
is a $G_{\delta}$-set. Obviously, this intersection contains $\langle f_1, \ldots, f_d \rangle$. Therefore, it is dense 
in $\overline{\langle f_1, \ldots, f_d \rangle}$, hence generic in there. Since this group has the cardinality of the 
continuum (because it has no isolated point) and is Abelian (because it is contained in the centralizer of $f_1$), 
one can choose $f_{d+1}$ therein so that $f_1,\ldots,f_{d+1}$ freely generate a copy of $\mathbb{Z}^{d+1}$ (in particular, the rotation 
number of $f_{d+1}$ is irrational). Finally, the fact that $f_{d+1}$ belongs to the intersection (\ref{eq:inter}) means that each 
$g \in \langle f_1 \ldots, f_{d+1} \rangle$ is rigid, thus closing the proof.
\end{proof}

As above, we can now proceed to build, for each $g \in \langle f_1, f_2, \ldots \rangle$, an affine  isometry $I_g$ of $L^1 (\clo)$ by letting
$$I_g (\psi) = (\psi \circ g ) \cdot Dg + \frac{D^2 g}{D g}.$$
The cocycle relation (\ref{eq:affine}) implies that this is actually a (faithful) group action of $\mathbb{Z}^{\infty}$.  The fact that $I_g$ is recurrent 
but has no fixed point for every nontrivial $g \in \langle f_1, f_2, \ldots \rangle$ proceeds as in \S \ref{s:L1}. We leave the details to the reader.

\subsection{The $L^2$ case.}

We first need a preparation lemma, the proof of which consists of a straightforward extension 
of the argument of Lemma~\ref{lem:quasim}.

\begin{lem} 
{\em The set $\mathcal{D}^*$ made by the diffeomorphisms $f \in \mathbb{F}^{\infty}$ of irrational rotation number such that, 
for all $m \neq 0$ ($m \in \mathbb{Z}$), one has 
$$\lim_{\, \, n \,\, \to} \! \sup_{ \!\! {}^\infty} \, \max_{[a,b,c,d] = 2} \, \, [f^{m \,n}(a),f^{m \,n}(b),f^{m\, n}(c),f^{m\, n}(d)] = \infty$$
is generic in $\mathbb{F}^{\infty}$.} 
\label{lem:quasim}
\end{lem}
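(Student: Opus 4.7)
The plan is to mimic the proof of Lemma \ref{lem:quasim} almost verbatim, exploiting the fact that a countable intersection of generic sets is still generic. First I would decompose $\mathcal{D}^* = \bigcap_{m \neq 0} \mathcal{D}^{(m)}$, where $\mathcal{D}^{(m)}$ denotes the subset of $f \in \mathbb{F}^{\infty}$ of irrational rotation number that realize the $\limsup$ condition for the single exponent $m$. Since $\mathbb{Z} \setminus \{0\}$ is countable, Baire's theorem reduces the problem to showing that each $\mathcal{D}^{(m)}$ is itself generic in $\mathbb{F}^{\infty}$.

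For fixed $m \neq 0$ I would then write, exactly as in the previous lemma,
$$\mathcal{D}^{(m)} = \bigcap_{k} \bigcap_{i} \bigcup_{j \geq i} \mathcal{D}^{(m)}_{k,j},
\qquad
\mathcal{D}^{(m)}_{k,j} := \Big\{ f \in \mathbb{F}^{\infty} \!: \max_{[a,b,c,d]=2} \, [f^{m j}(a), f^{m j}(b), f^{m j}(c), f^{m j}(d)] > k \Big\}.$$
Openness of each $\mathcal{D}^{(m)}_{k,j}$ is automatic from the continuous dependence of $f^{mj}$ on $f$ and of the crossratio on the four points. The only real work is to verify that every $\bigcup_{j \geq i} \mathcal{D}^{(m)}_{k,j}$ is dense in $\mathbb{F}^{\infty}$.

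For the denseness step I would reuse the same dense family as in Lemma \ref{lem:quasim}: nonperiodic diffeomorphisms with rational rotation number $p/q$, denominator $q \geq 3$, not conjugate to a rotation. The construction in that proof already delivers points $a, b, c, d$ with $[a,b,c,d] = 2$ for which $[f^{qN}(a), f^{qN}(b), f^{qN}(c), f^{qN}(d)] \to \infty$ as $N \to \infty$. To transfer this blow-up to iterates of the form $f^{mj}$, I would set $q' := q / \gcd(q, |m|)$ and restrict to $j = q' N$, so that $m j$ becomes a nonzero integer multiple of $q$ with $|mj| \to \infty$. For $m > 0$ these are exactly a subsequence of the iterates $f^{qN'}$ with $N' \to +\infty$, and for $m < 0$ the same argument is applied to $f^{-q}$ (whose periodic points coincide with those of $f^q$, with one-sided attracting/repelling roles swapped) to obtain the analogous blow-up. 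Either way, the same quadruple $(a,b,c,d)$ places $f$ into $\mathcal{D}^{(m)}_{k,j}$ for every $k$ and all sufficiently large $j$ along the arithmetic progression $q' \mathbb{N}$.

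The main, and really only, obstacle is the elementary arithmetic bookkeeping needed to guarantee that $\{mj\}_{j\geq 1}$ intersects $q\mathbb{Z}$ in an unbounded set, which is precisely what the denominator $q' = q/\gcd(q,|m|)$ encodes, together with the symmetric handling of negative $m$. Beyond that, the argument is a direct transcription of the proof of Lemma \ref{lem:quasim}.
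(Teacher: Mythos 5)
Your proposal is correct and follows exactly the route the paper takes: the same decomposition $\mathcal{D}^* = \bigcap_{m\neq 0}\bigcap_k\bigcap_i\bigcup_{j\geq i}\mathcal{D}^{(m)}_{k,j}$, with openness immediate and denseness checked on the same dense family of nonperiodic diffeomorphisms of rational rotation number. The paper merely asserts the denseness step is ``analogous'' and leaves it to the reader, whereas you supply the missing arithmetic (taking $j$ in $q'\mathbb{N}$ with $q'=q/\gcd(q,|m|)$ so that $mj\in q\mathbb{Z}$) and the symmetric treatment of $m<0$ via $f^{-q}$ --- both of which are the right details.
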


\begin{proof}
For each $k \geq 1$ and $m \neq 0$, we now denote $\mathcal{D}_k^m$ the set of $f \in \mathbb{D}^{\infty}$ such that 
$$\lim_{\,\, n \,\,\, \to} \! \sup_{ \!{}^\infty} \, \max_{[a,b,c,d] = 2} \, \, [f^{m\,n}(a),f^{m\,n}(b),f^{m\,n}(c),f^{m\,n}(d)] > k. $$
Since $\mathcal{D}^* = \bigcap_{m \neq 0} \bigcap_k \mathcal{D}_k^m$ and $\mathcal{D}_k^m = \bigcap_i \bigcup_{j \geq i} \mathcal{D}_{k,j}^m$, 
where 
$$\mathcal{D}_{k,j}^m := \Big\{ f \in \mathbb{F}^{\infty} \!: \max_{[a,b,c,d]=2} \, [f^{mj}(a),f^{mj}(b),f^{mj}(c),f^{mj}(d)] > k \Big\},$$
it suffices to show that each set $\bigcup_{j\geq i}  \mathcal{D}_{k,j}^m$ is open and dense in $\mathbb{F}^{\infty}$. 
This is proved by an argument analogous to that of Lemma \ref{lem:quasim}. We leave the details to the reader.
\end{proof}

Starting with an element $f_1 := f$ provided by this lemma which is also rigid (c.f. Lemma~\ref{lem:yoccoz}), let us perform the construction 
of the proof of Proposition \ref{lem:mas}. We thus obtain $C^{\infty}$ diffeomorphisms $f_1,f_2,\ldots$ that freely generate a group isomorphic 
to $\mathbb{Z}^{\infty}$ in which every element is rigid. We claim that, for each nontrivial $g \in \langle f_1, f_2, \ldots \rangle$, at least one 
of the following conditions hold: 
\begin{small}
\begin{equation}\label{eq:una-de-dos}
\lim_{\, \, n \,\, \to} \! \inf_{ \!\! \infty} \, \min_{[a,b,c,d] = 2} \, \, [g^n(a),g^n(b),g^n(c),g^n(d)] = 0, \quad 
\lim_{\, \, n \,\, \to} \! \sup_{ \!\! {}^\infty} \, \max_{[a,b,c,d] = 2} \, \, [g^n(a),g^n(b),g^n(c),g^n(d)] = \infty.
\end{equation}
\end{small}Otherwise, the group $\langle g \rangle$ would be uniformly quasisymmetric in the terminology of \cite{Hi90}. 
A deep theorem therein combined with some reformulations on the definition of quasisymmetry (see for instance \cite{LV73} 
as well as \cite{Za88}) would then imply that $g$ is conjugate to the rotation by a homeomorphism $h$ satisfying
$$C([a,b,c,d]) \leq [h(a),h(b),h(c),h(d)] \leq D ([a,b,c,d])$$
for certain  continuous increasing functions $C,D$ from $(0,\infty)$ to $(0,\infty)$ (where $a<b<c<d<a$ are arbitrary). 
However, since $\langle f_1, f_2, \ldots \rangle$ is contained in the centralizer of $f$, the same $h$ would conjugate $f$ 
to the corresponding rotation $R$. This would imply that, for all $a<b<c<d<a$ satisfying $[a,b,c,d]=2$, we would have  
\begin{small}
$$[f^n (a) , f^n (b), f^n (c), f^n (d)] = [ h^{-1}R^nh(a), h^{-1}R^nh(b), h^{-1}R^nh(c), h^{-1}R^nh(d)] \leq C^{-1}(D([a,b,c,d])),$$
\end{small}which contradicts our choice of $f$. 

Finally, let us consider the affine isometric (faithful) action of $\langle f_1, f_2, \ldots \rangle$ on $L^2 (\clo \times \clo)$ given by 
$$I_g (\chi) (x,y) := \chi (g(x), g(y)) \cdot \sqrt{Dg(x) Dg(y)} + \frac{\sqrt{ Dg(x) Dg (y) } }{\mathrm{dist} (g(x),g(y))} - \frac{1}{\mathrm{dist} (x,y)}.$$ 
As before, rigidity implies that each $I_g$ is a recurrent isometry. We claim, however, that $I_g$ has no fixed point for each nontrivial $g$. 
Indeed, otherwise, arguing as in the proof of Lemma \ref{lem:incompatibles} and Remark \ref{rem:incompatibles}, we would get an 
uniform upper bound and a uniform away-from-zero lower bound for $[g^n(a),g^n(b),g^n(c),g^n(d)]$ for all points $a<b<c<d<a$ 
satisfying $[a,b,c,d] = 2$ (independently of $n$). However, this property (that would reflect in inequalities of the form 
(\ref{eq:arriba}) and (\ref{eq:abajo}) for $g$) would violate (\ref{eq:una-de-dos}).


\vspace{0.4cm}

\noindent{\bf Acknowledgments.} 
This work was funded by the ECOS Research Project 23003 ``Small spaces under action''. I also acknowledge 
the University of Geneva and the Institut Fourier (Grenoble) for the hospitality during the preparation of this text. 


\begin{footnotesize}

\vspace{0.1cm}

\noindent Andr\'es Navas\\ 

\noindent Universidad de Santiago de Chile\\ 

\noindent Alameda 3363, Santiago, Chile\\ 

\noindent email: andres.navas@usach.cl

\end{footnotesize}

\end{document}